\newtheorem{Theorem}{Theorem}[section]
\newtheorem{Lemma}[Theorem]{Lemma}
\newtheorem{Corollary}[Theorem]{Corollary}
\newtheorem{Proposition}[Theorem]{Proposition}
\newtheorem{Remark}[Theorem]{Remark}
\begin{document}
\title{Locating-Dominating Sets and Identifying Codes of a Graph Associated to a Finite Vector Space}
\author{Muhammad Murtaza, Imran Javaid\thanks{Corresponding author}, Muhammad Fazil
\vspace{4mm}\\
\normalsize  Centre for Advanced Studies in Pure and Applied
Mathematics,\\\normalsize Bahauddin Zakariya University, Multan,
Pakistan\\\normalsize {\tt imran.javaid@bzu.edu.pk}}

\date{}
\maketitle

\begin{abstract}
In this paper, we investigate the problem of covering the vertices
of a graph associated to a finite vector space as introduced by Das
\cite{Das}, such that we can uniquely identify any vertex by
examining the vertices that cover it. We use locating-dominating
sets and identifying codes, which are closely related concepts for
this purpose. These sets consist of a dominating set of graph such
that every vertex is uniquely identified by its neighborhood within
the dominating sets. We find the location-domination number and the
identifying number of the graph and study the exchange property for
locating-dominating sets and identifying codes.
\end{abstract}

\textit{AMS Subject Classification Number}: 05C12, 05C25, 05C69\\

\textit{Keywords: Vector space, Location-domination number, Exchange property}\\


\section{Preliminaries}
The association of graphs to algebraic structures has become the
interesting research topic for the past few decades. See for
instance: commuting graphs for groups \cite{bates1,bates2,com},
power graphs for groups and semigroups
\cite{Cameron,Chakrabarty,Moghaddamfar}, zero divisor graph
associated to a commutative ring \cite{Anderson,Beck}. The
association of a graph and vector space has history back in 1958 by
Gould \cite{Gould}. Later, Chen \cite{chen} investigated on vector
spaces associated with a graph. Carvalho \cite{Car} has studied
vector space and the Petersen Graph. In the recent past, Manjula
\cite{Manjula} used vector spaces and made it possible to use
techniques of linear algebra in studying the graph. Intersection
graphs assigned to vector space were studied \cite{Rad,Talebi}. Das
\cite{Das} introduced a new graph structure, called non-zero
component graph on finite dimensional vector spaces. He showed that
the graph is connected and found its domination number and
independence number \cite{Das1}. He characterized the maximal
cliques in the graph and found the exact clique number, for some
particular cases \cite{Das1}. Das has also given some results on
size, edge-connectivity and the chromatic number of the graph
\cite{Das1}.

The covering code problem for a given graph involves finding a
minimum set of vertices whose neighborhoods uniquely overlap at any
given graph vertex. The problem has demonstrated its fundamental
nature through a wide variety of applications. Locating-dominating
sets were introduced by Slater \cite{slater1, slater3} and
identifying codes by Karpovsky et al. \cite{Karpov}.
Locating-dominating sets are very similar to identifying codes with
the subtle difference that only the vertices not in the
locating-dominating set are required to have unique identifying
sets. The decision problem for locating-dominating sets for directed
graphs has been shown to be an NP-complete problem \cite{char2}. A
considerable literature has been developed in this field (see
\cite{ber,char1,col,fin,hon,rall,slater1,slater2}). In \cite{cac1},
it was pointed out that each locating-dominating set is both
locating and dominating set. However, a set that is both locating
and dominating is not necessarily a locating-dominating set.

The initial application of locating-dominating sets and identifying
codes was fault-diagnosis in the maintenance of multiprocessor
systems \cite{Karpov}.
More recently, identifying codes and locating-dominating sets were
extended to applications for joint monitoring and routing in
wireless sensor networks \cite{Laifen} and environmental monitoring
\cite{berger}.


A natural question arises in reader's mind that how can we
distinguish the need of identifying codes or locating-dominating
sets for a system? A system, in which processors or sensors are able
to send the information about themselves and their neighbors, an
identifying code is necessary. However, the systems where the
sensors work without failure or if their only task is to test their
neighborhoods (not themselves) then we shall search for
locating-dominating sets. Moreover, the existence of identifying
codes is not always guaranteed in a graph (as we shall see in our
later discussion) and then a locating-dominating set is the next
best alternative.

In this paper, we study the locating-dominating sets and identifying
codes for the graph associated to finite vector space as defined in
\cite{Das}. Also, we find location-domination number and identifying
number of the graph and study the exchange property of the graph for
these graph invariants.

Now, we recall some definitions of graph theory which are necessary
for this article. We use $\Gamma$ to denote a connected graph with
vertex set $V(\Gamma)$ and edge set $E(\Gamma)$. The \emph{degree}
of the vertex $v$ in $\Gamma$, denoted by $deg(v)$, is the number of
edges to which $v$ belongs. The \emph{open neighborhood} of the
vertex $u$ of $\Gamma$ is $N(u)=\{v\in V(\Gamma):uv\in E(\Gamma)\}$
and the \emph{closed neighborhood} of $u$ is $N[u]=N(u)\cup \{u\}$.

 Formally, we define a locating-dominating set as: A set $L_D$ of vertices of $\Gamma$ is
called a locating-dominating set for $\Gamma$ if for every two
distinct vertices $u,v \in V(\Gamma)\setminus L_D$, we have
$\emptyset \neq N(u)\cap L_D\neq N(v)\cap L_D\neq\emptyset$. The
{\it location-domination number}, denoted by $\lambda(\Gamma)$, is
the minimum cardinality of a locating-dominating set of $\Gamma$.

An identifying code is a subset of vertices in a graph with the
property that the neighborhood of every vertex has a unique
intersection with the code. Formally it is defined as: A set $I_D$
is called an \emph{identifying code} for the graph $\Gamma$ if
$N[u]\cap I_D\ne N[v]\cap I_D$ for all $u,v\in V(\Gamma)$. The
cardinality of a smallest identifying code is called the
\emph{identifying number} of $\Gamma$ and we denote it by
$I(\Gamma)$.

Unlike identifying codes, every graph has a trivial
locating-dominating set, the entire set of vertices. On the other
hand, a graph may not be an identifying code, because if
 $N[u]=N[v]$ for some $u,v\in V(\Gamma)$, then clearly
 $V(\Gamma)$ is not an identifying code. Since an identifying
  code is also a locating-dominating set, therefore
\begin{equation}\label{Inequality Lamda and I}
 \lambda(\Gamma(\mathbb{V}))\le I(\Gamma(\mathbb{V})).
\end{equation}
Two vertices $u,v$ are \emph{adjacent twins} if $N[u]=N[v]$ and
\emph{non-adjacent twins} if $N(u)=N(v)$. If $u,v$ are adjacent or
non-adjacent twins, then $u,v$ are \emph{twins}. A set of vertices
$T$ is called a \emph{twin-set} if any two of its vertices are twins
\cite{Hernando}. By definition of twin vertices and twin-set, we
have the following straightforward  results:
\begin{Proposition}\label{fazf} Suppose that $u$, $v$ are twins in a connected
graph $\Gamma$ and $L_D$ is a locating-dominating set of  $\Gamma$,
then either $u$ or $v$ is in $L_D$. Moreover, if $u\in L_D$ and
$v\not\in L_D$, then $(L_D\setminus \{u\})\cup\{v\}$ is a
locating-dominating set of $\Gamma$.
\end{Proposition}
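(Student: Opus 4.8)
The plan is to prove the two assertions separately, relying only on the definitions of twins and of a locating-dominating set. Throughout, let $L_D$ be a locating-dominating set of $\Gamma$ and suppose $u,v$ are twins, so that either $N[u]=N[v]$ (adjacent twins) or $N(u)=N(v)$ (non-adjacent twins).

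\textbf{First assertion (at least one of $u,v$ lies in $L_D$).} I would argue by contradiction: suppose $u,v\in V(\Gamma)\setminus L_D$. Since $u,v$ are distinct vertices outside $L_D$, the locating-dominating property forces $N(u)\cap L_D\neq N(v)\cap L_D$ and both intersections nonempty. So it suffices to show that being twins implies $N(u)\cap L_D = N(v)\cap L_D$, which is the contradiction. If $u,v$ are non-adjacent twins then $N(u)=N(v)$ outright, so intersecting with $L_D$ gives equality. If $u,v$ are adjacent twins then $N[u]=N[v]$, i.e. $N(u)\cup\{u\}=N(v)\cup\{v\}$; since $u\notin L_D$ and $v\notin L_D$, removing $u$ and $v$ before intersecting with $L_D$ is harmless, and again $N(u)\cap L_D=N(v)\cap L_D$. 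Either way we contradict the defining inequality, so at least one of $u,v$ is in $L_D$.

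\textbf{Second assertion (exchange $u\leftrightarrow v$).} Now assume $u\in L_D$, $v\notin L_D$, and set $L_D'=(L_D\setminus\{u\})\cup\{v\}$. I must check the locating-dominating condition for every pair of distinct vertices $x,y\in V(\Gamma)\setminus L_D' = (V(\Gamma)\setminus L_D)\setminus\{v\}\ \cup\ \{u\}$. The natural approach is to exploit the symmetry coming from the fact that $u$ and $v$ are twins: the map swapping $u$ and $v$ and fixing all other vertices is (essentially) a reflection of the adjacency structure, so intersecting a neighborhood with $L_D'$ should mirror intersecting with $L_D$. Concretely, I would verify the key identity $N(x)\cap L_D' = \sigma\bigl(N(\sigma(x))\cap L_D\bigr)$, where $\sigma$ is the transposition of $u$ and $v$; this reduces the locating-dominating property of $L_D'$ to that of $L_D$. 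One then distinguishes a few cases according to whether $x,y$ equal $u$ or not and whether they are adjacent to $u$ (equivalently to $v$), using $N(u)\setminus\{v\}=N(v)\setminus\{u\}$ in the adjacent-twin case and $N(u)=N(v)$ in the non-adjacent-twin case to keep the intersections with $L_D$ and $L_D'$ in correspondence. Domination (nonemptiness of each intersection) follows from the same bookkeeping together with the first assertion applied inside $L_D$.

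The main obstacle I anticipate is the careful case analysis in the second part when one of the two test vertices is $u$ itself: one has to confirm that $N(u)\cap L_D'$ is nonempty and differs from $N(y)\cap L_D'$ for every other $y\notin L_D'$, and here the adjacent-twin case (where $u\in N(v)$ and $v\in N(u)$) needs slightly different handling than the non-adjacent-twin case. Everything else is a routine translation of the hypothesis that $L_D$ is locating-dominating through the twin symmetry, so the proof is short once that symmetry is made explicit.
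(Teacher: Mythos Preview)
The paper does not actually give a proof of this proposition; it is presented as a ``straightforward result'' following directly from the definitions, so there is nothing to compare your argument against. Your argument is correct. For the second assertion you can shorten the bookkeeping considerably by observing once and for all that the transposition $\sigma$ swapping the twins $u$ and $v$ is a graph automorphism of $\Gamma$ (this holds in both the adjacent-twin and non-adjacent-twin cases, as your identity $N(u)\setminus\{v\}=N(v)\setminus\{u\}$ shows); since $L_D'=\sigma(L_D)$ and automorphisms preserve the locating-dominating property, the conclusion is immediate without any case analysis on the test vertices $x,y$.
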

\begin{Proposition}\label{rem3.3}
Let $T$ be a twin-set of order $m\geq 2$ in a connected graph
$\Gamma$. Then, every locating-dominating set $L_D$ of $\Gamma$
contains at least $m-1$ vertices of $T$.
\end{Proposition}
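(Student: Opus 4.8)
The plan is to argue by contradiction using the defining property of a locating-dominating set. Suppose $L_D$ is a locating-dominating set of $\Gamma$ that contains at most $m-2$ vertices of $T$. Then $T\setminus L_D$ has at least two elements, so I can pick two distinct vertices $u,v\in T$ with $u,v\notin L_D$. The goal is to show that $u$ and $v$ then have the same trace on $L_D$, i.e. $N(u)\cap L_D=N(v)\cap L_D$, which directly contradicts the requirement that $\emptyset\neq N(u)\cap L_D\neq N(v)\cap L_D\neq\emptyset$ for distinct vertices outside $L_D$.

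First I would dispatch the case in which $u,v$ are non-adjacent twins: here $N(u)=N(v)$ by definition, so intersecting with $L_D$ trivially gives $N(u)\cap L_D=N(v)\cap L_D$. Next I would handle the case in which $u,v$ are adjacent twins, so $N[u]=N[v]$, equivalently $N(u)\cup\{u\}=N(v)\cup\{v\}$. Since $u$ and $v$ are adjacent, $v\in N(u)$ and $u\in N(v)$, and one checks that $N(u)\setminus\{v\}=N(v)\setminus\{u\}$. Because $u\notin L_D$ and $v\notin L_D$, neither of the removed vertices lies in $L_D$, so $N(u)\cap L_D=(N(u)\setminus\{v\})\cap L_D=(N(v)\setminus\{u\})\cap L_D=N(v)\cap L_D$. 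In either case we reach the desired contradiction, so $|T\setminus L_D|\le 1$, that is, $L_D$ contains at least $m-1$ vertices of $T$.

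I do not expect a genuine obstacle here; the statement is essentially a direct consequence of the definitions, and indeed it generalizes Proposition \ref{fazf}. The only point requiring a little care is the adjacent-twin case, where the open neighborhoods of $u$ and $v$ are not equal but differ exactly in the swapped pair $\{u,v\}$ — the argument works precisely because both of these vertices are assumed to be outside $L_D$, so the discrepancy is invisible to $L_D$. It is also worth noting that connectedness of $\Gamma$ is not really needed for the conclusion, but we keep the hypothesis to match the setting of the rest of the paper.
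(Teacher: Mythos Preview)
Your proof is correct. In the paper this proposition is stated without proof, introduced merely with the sentence ``By definition of twin vertices and twin-set, we have the following straightforward results,'' so your argument is precisely the routine verification the authors omit: pick two vertices of $T$ outside $L_D$ and observe that they have the same open neighborhood within $L_D$, with the only nontrivial check being the adjacent-twin case, which you handle correctly.
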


\subsection{Non-Zero Component Graph} Let $\mathbb{V}$ be a vector
space over a field $\mathbb{F}$ with a basis $\{b_{1},
b_{2},\ldots,b_{n}\}$. A vector $v\in \mathbb{V}$ is expressed
uniquely as a linear combination of the form $v
=c_{1}b_{1}+c_{2}b_{2}+\cdots+c_{n}b_{n}$. A \emph{non-zero
component graph}, denoted by $\Gamma(\mathbb{V})$, can be associated
with a finite dimensional vector space in the following way: the
vertex set of the graph $\Gamma(\mathbb{V})$ consists of the
non-zero vectors and two vertices are joined by an edge if they
share at least one $b_{i}$ with non-zero coefficient in their unique
linear combination with respect to $\{b_{1}, b_{2},\ldots,b_{n}\}$
\cite{Das}. It is proved in~\cite{Das} that $\Gamma(\mathbb{V})$ is
independent of the choice of basis, i.e., isomorphic non-zero
component graphs are obtained for two different bases.

\begin{figure}[h!]
        \centerline
        {\includegraphics[width=4cm]{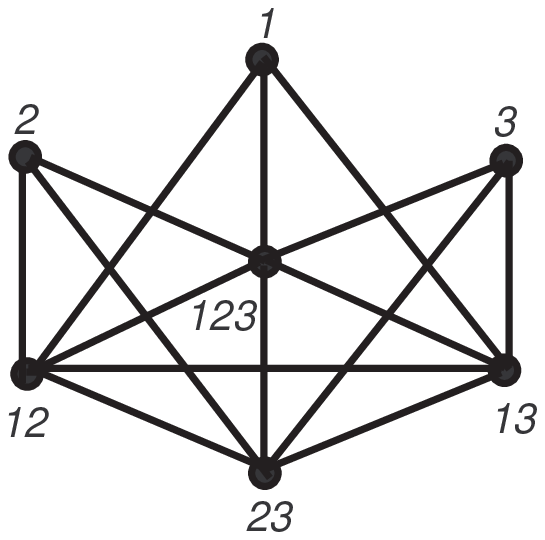}}\label{splitcycle}
      \caption{$dim(\mathbb{V})=3$; $\mathbb{F}= Z_2=\{\overline{0},\overline{1}\}$.}
\end{figure}

\begin{Theorem}\emph{\cite{Das1}}\label{order}
If $\mathbb{V}$ be an $n$-dimensional vector space over a finite
field $\mathbb{F}$ with $q$ elements, then the order of
$\Gamma(\mathbb{V})$ is $q^n-1$ and the size of $\Gamma(\mathbb{V})$
is
  $$\frac{q^{2n}-q^n+1-(2q-1)^n}{2}.$$
\end{Theorem}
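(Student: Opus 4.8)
The plan is to treat the order and the size separately: the order is immediate, and the size reduces to a counting argument organized around the \emph{support} of a vector. Since $\mathbb{V}$ is $n$-dimensional over a field with $q$ elements, it has exactly $q^n$ vectors; deleting the zero vector leaves $q^n-1$ vertices, which gives the order at once.

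For the size, I would attach to each vector $v=c_1b_1+\cdots+c_nb_n$ its support $\operatorname{supp}(v)=\{i: c_i\neq 0\}$. By the defining adjacency rule of $\Gamma(\mathbb{V})$, two distinct vertices $u,v$ are joined by an edge if and only if $\operatorname{supp}(u)\cap\operatorname{supp}(v)\neq\emptyset$. Hence the number of edges equals the number of unordered pairs of distinct non-zero vectors with intersecting supports, which I would compute as the total number of unordered pairs of distinct non-zero vectors, namely $\binom{q^n-1}{2}$, minus the number of such pairs whose supports are \emph{disjoint}.

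To count the disjoint-support pairs, first count ordered pairs $(u,v)$ of arbitrary (possibly zero) vectors with $\operatorname{supp}(u)\cap\operatorname{supp}(v)=\emptyset$: each of the $n$ coordinates independently falls into exactly one of three cases — nonzero in $u$ only ($q-1$ choices), nonzero in $v$ only ($q-1$ choices), or zero in both ($1$ choice) — so there are $(2q-1)^n$ such ordered pairs. Then remove the degenerate ones with $u=0$ or $v=0$: there are $q^n$ with $u=0$, $q^n$ with $v=0$, and $1$ with both, so by inclusion–exclusion $2q^n-1$ pairs are discarded, leaving $(2q-1)^n-2q^n+1$ ordered pairs of \emph{nonzero} vectors with disjoint support. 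None of these has $u=v$, since $u=v$ with disjoint support forces $\operatorname{supp}(u)=\emptyset$, i.e.\ $u=0$; dividing by $2$ therefore gives exactly $\tfrac12\bigl((2q-1)^n-2q^n+1\bigr)$ unordered pairs of distinct nonzero vectors with disjoint support.

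Subtracting, the number of edges is $\binom{q^n-1}{2}-\tfrac12\bigl((2q-1)^n-2q^n+1\bigr)=\tfrac12\bigl((q^n-1)(q^n-2)-(2q-1)^n+2q^n-1\bigr)$, and since $(q^n-1)(q^n-2)=q^{2n}-3q^n+2$ this collapses to $\dfrac{q^{2n}-q^n+1-(2q-1)^n}{2}$, as claimed. The only delicate point — and the one I would check most carefully — is the bookkeeping around the zero vector and the passage from ordered to unordered pairs; the rest is routine algebraic simplification.
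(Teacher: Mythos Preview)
Your argument is correct: the order is immediate, and your complement count via supports --- counting the $(2q-1)^n$ ordered pairs with disjoint support coordinate-by-coordinate, stripping out the $2q^n-1$ degenerate pairs involving the zero vector, halving, and subtracting from $\binom{q^n-1}{2}$ --- is clean and the algebra checks out.

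There is nothing to compare against, however: the paper does not prove this theorem but merely quotes it from Das~\cite{Das1}, so no proof appears in the present source. Your proof stands on its own and is in fact the natural one; the support map $v\mapsto\operatorname{supp}(v)$ is exactly the device that makes the adjacency condition tractable, and the per-coordinate product $(2q-1)^n$ is the standard way to enumerate pairs of vectors with disjoint supports.
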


\begin{Theorem}\emph{\cite{Das}}\label{degree} Let $\mathbb{V}$ be an $n$-dimensional
 vector space over a finite field $F$ with
$q$ elements and $\Gamma(\mathbb{V})$ be its associated graph with
respect to a basis $\{b_1, b_2, . . . , b_n\}$, then a vertex having
$s$ non-zero coefficients in its unique linear
combination of basis vector has degree 
$(q^{s}- 1)q^{n-s}-1.$
\end{Theorem}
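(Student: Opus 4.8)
The plan is to fix a vertex $v$ with exactly $s$ nonzero coordinates and count its neighbours by complementary counting. Write $v = c_1 b_1 + \cdots + c_n b_n$ and let $S = \{\, i : c_i \neq 0 \,\}$ be its support, so that $|S| = s$. By the definition of $\Gamma(\mathbb{V})$, a vertex $w = d_1 b_1 + \cdots + d_n b_n$ is adjacent to $v$ exactly when $w \neq v$ and $d_i \neq 0$ for some $i \in S$; equivalently, when the support of $w$ meets $S$. Thus $\deg(v)$ is the number of vertices whose support meets $S$, minus one for $v$ itself.

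First I would count the vectors whose support is \emph{disjoint} from $S$: such a vector has $d_i = 0$ for every $i \in S$, while its remaining $n - s$ coordinates range freely over $\mathbb{F}$, giving exactly $q^{n-s}$ vectors, among which is the zero vector. Since $\mathbb{V}$ has $q^n$ vectors in total and the zero vector is not a vertex of $\Gamma(\mathbb{V})$, the number of \emph{vertices} whose support meets $S$ equals $q^n - q^{n-s}$ (the zero vector, having empty support, already lies in the disjoint class and is thereby excluded). Finally $v$ is itself counted in this number, because $S \neq \emptyset$, so subtracting $1$ yields
\[
\deg(v) \;=\; q^n - q^{n-s} - 1 \;=\; (q^s - 1)q^{n-s} - 1 .
\]

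I do not expect any genuine obstacle here: this is essentially a one-line inclusion--exclusion, and the whole content is careful bookkeeping --- noticing that the zero vector is automatically discarded when we remove the vectors with support disjoint from $S$, and remembering the final $-1$ for $v$ itself. As a consistency check, $s = n$ gives $\deg(v) = q^n - 2$, one less than the order $q^n - 1$ of $\Gamma(\mathbb{V})$ from Theorem~\ref{order}, which matches the fact that a vector with full support is adjacent to every other vertex.
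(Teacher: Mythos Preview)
Your argument is correct and complete. Note, however, that in this paper Theorem~\ref{degree} is quoted from~\cite{Das} and is stated \emph{without proof}; there is no proof in the present paper to compare against. The only related computation here is the remark after Lemma~\ref{CarNbr}, which re-derives the special case $q=2$ by a different route --- summing $|N(v)\cap T_r|$ over all classes $T_r$ via the case analysis of that lemma --- and observes that the answer agrees with Theorem~\ref{degree}. Your complementary-counting argument is more direct and handles general $q$ in one stroke, whereas the paper's class-by-class count is tailored to $q=2$ and serves only as a consistency check rather than an independent proof.
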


\section{Locating-Dominating Sets and Identifying Codes of Non-Zero
 Component Graph}
 In this section, we study the location-domination
number of non-zero component graph $\Gamma(\mathbb{V}).$

We partition the vertex set of $\Gamma(\mathbb{V})$ into $n$ classes
$T_i$, where $T_i=\{v\in \mathbb{V}: v$ is a linear combination of
basis vectors with $i$ non-zero coefficients$\}$. For example, if
$n=3$ and $q=2$, then $T_2=\{b_1+b_2,b_2+b_3,b_1+b_3\}$.
\begin{Lemma}\label{CarNbr}
Let $\mathbb{V}$ be a vector space of dimension $n$ over a field
$\mathbb{F}$ of $2$ elements. If $v\in T_s$ for $s$ $(1\le s \le
n)$, then for $r$ $(1\le r \le n)$
$$|N(v)\cap T_r|=\left\{
\begin{array}{ll}
{n\choose r}-{{n-s}\choose r}-1\hskip 0.8cm if \hskip 1cm r\le n-s\,\,\, \mbox{and}\,\,\,r=s  \\
{n\choose r}-{{n-s}\choose r}\hskip 1.4cm if \hskip 1cm r\le n-s\,\,\, \mbox{and}\,\,\,r\ne s \\
{n\choose r}-1\hskip 2.1cm if \hskip 1cm n-s<r\le n\,\,\, \mbox{and}\,\,\,r=s \\
{n\choose r}\hskip 2.7cm if \hskip 1cm n-s<r\le n\,\,\, \mbox{and}\,\,\,r\ne s.\\
\end{array}
\right.$$
\end{Lemma}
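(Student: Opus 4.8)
The plan is to exploit the fact that over $\mathbb{F}=\mathbb{F}_2$ a nonzero vector is completely determined by its support, so the combinatorial content of $\Gamma(\mathbb{V})$ reduces to a statement about subsets of $[n]=\{1,\ldots,n\}$. Concretely, I would set up the bijection between $V(\Gamma(\mathbb{V}))$ and the nonempty subsets of $[n]$ sending a vertex $v=\sum_{i\in S}b_i$ to its support $S$. Under this correspondence $T_r$ is exactly the family of $r$-element subsets of $[n]$, so $|T_r|=\binom{n}{r}$, and two vertices are adjacent precisely when their supports intersect (this is just the definition of $\Gamma(\mathbb{V})$ specialized to $q=2$).

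Now fix $v\in T_s$ with support $S$, $|S|=s$. Then $N(v)\cap T_r$ is the set of $r$-subsets $R$ of $[n]$ with $R\cap S\neq\emptyset$ and $R\neq S$. The key step is a complementary count: an $r$-subset $R$ fails to meet $S$ if and only if $R\subseteq [n]\setminus S$, and since $|[n]\setminus S|=n-s$ there are exactly $\binom{n-s}{r}$ such subsets, with the usual convention $\binom{n-s}{r}=0$ when $r>n-s$. Hence the number of $r$-subsets meeting $S$ equals $\binom{n}{r}-\binom{n-s}{r}$.

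It remains only to correct for the single subset $R=S$, which meets $S$ and has size $s$, so it must be removed exactly when $r=s$. Combining this with whether $\binom{n-s}{r}$ vanishes yields the four displayed cases: when $r\le n-s$ the term $\binom{n-s}{r}$ survives and we subtract an extra $1$ precisely when $r=s$; when $n-s<r\le n$ every $r$-subset already meets $S$, so $\binom{n-s}{r}=0$, and again the extra $-1$ appears only when $r=s$. There is no genuine obstacle here—the argument is elementary counting—and the only point needing care is keeping the ``$R\neq v$'' exclusion logically separate from the disjointness count, so that the $-1$ shows up in exactly the two cases where $r=s$ and nowhere else.
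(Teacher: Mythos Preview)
Your argument is correct and follows essentially the same route as the paper: both reduce the question to counting which $r$-element supports are disjoint from the fixed $s$-element support of $v$, obtaining $\binom{n-s}{r}$ non-neighbours by complementary counting and then subtracting the extra $1$ for $v$ itself exactly when $r=s$. Your explicit use of the support bijection over $\mathbb{F}_2$ makes the counting slightly more transparent, but the substance is identical.
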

\begin{proof} We consider the following cases for $r$:
\begin{enumerate}
\item If $r \leq n-s$, then ${{n-s}\choose r}$ elements of $T_r$
have $s$ zero coefficients in their unique linear combination of
basis vectors for those $s$ basis vectors which have the non-zero
coefficients in the unique linear combination of $v$, and hence
these elements of $T_r$ are not adjacent to $v$. Thus, $|N(v)\cap
T_r|= {n\choose r}-{{n-s}\choose r}$ or ${n\choose r}-{{n-s}\choose
r}-1$ according as $r\neq s$ or $r=s$, respectively.
\item If $r>n-s$, then each element of $T_r$ will have at least one
non-zero coefficient in its unique linear combination of basis
vectors for those $s$ basis vectors which have the non-zero
coefficients in the unique linear combination of $v$, and hence $v$
is adjacent to all elements of $T_r$. Thus, $|N(v)\cap T_r|=
{n\choose r}$ or ${{n}\choose r}-1$ according as $r\neq s$ or $r=s$,
respectively.
\end{enumerate}
\end{proof}
Let $v\in T_s$, then it can be seen from Lemma \ref{CarNbr} that
$deg(v)=[\sum \limits_{r=1}^n |N(v)\cap T_r|]-1=\sum
\limits_{r=1}^{n-s}[{n\choose r}-{{n-s}\choose r}]+\sum
\limits_{r=n-s+1}^{n}{n\choose r}-1 =(2^s-1)2^{n-s}-1$ which is
consistent with Theorem \ref{degree} for $q=2$.

\begin{Remark}
Let $\mathbb{V}$ be a vector space of dimension $n$ over a field
$\mathbb{F}$ of $2$ elements. If $v\in T_s$ for $s$ $(1\le s \le
n)$, then $deg(v)=(2^s-1)2^{n-s}-1$.
\end{Remark}

\begin{Lemma}\label{T2 Intersections}
Let $\mathbb{V}$ be a vector space of dimension $n\ge 4$ over a
field $\mathbb{F}$ of $2$ elements. If $u,v\in
V(\Gamma(\mathbb{V}))\setminus T_{n-1}$, then $N(u)\cap T_2\ne
N(v)\cap T_2$.
\end{Lemma}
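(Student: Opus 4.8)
The plan is to recast everything in terms of supports. Since $\mathbb{F}$ has only two elements, a vertex $w$ of $\Gamma(\mathbb{V})$ is completely determined by its \emph{support} $S_w\subseteq\{1,\dots,n\}$ --- the set of indices of basis vectors occurring with non-zero coefficient --- and $w\in T_s$ precisely when $|S_w|=s$, while two distinct vertices are adjacent exactly when their supports meet. Writing $\overline{S}=\{1,\dots,n\}\setminus S$ and $\binom{A}{2}$ for the collection of $2$-element subsets of $A$, a vertex $x\in T_2$ fails to be adjacent to $w$ exactly when $S_x\subseteq\overline{S_w}$ or $x=w$. Hence the part of $T_2$ \emph{missing} from $N(w)\cap T_2$ is
\begin{equation*}
M_w \;=\; \binom{\overline{S_w}}{2}\;\cup\;\bigl(\{S_w\}\cap T_2\bigr),
\end{equation*}
so $M_w=\binom{\overline{S_w}}{2}$ when $w\notin T_2$ and $M_w=\binom{\overline{S_w}}{2}\cup\{S_w\}$ when $w\in T_2$; it is then enough to show $M_u\neq M_v$ for all distinct $u,v\notin T_{n-1}$.

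The engine of the argument is the elementary observation that $\binom{A}{2}$ recovers $A$ as soon as $|A|\neq1$: if $|A|\ge 2$ then $A=\bigcup\binom{A}{2}$, and $|A|=0$ forces $\binom{A}{2}=\emptyset$, the only ambiguous size being $|A|=1$ --- which is exactly what $w\notin T_{n-1}$, i.e.\ $|\overline{S_w}|\neq1$, excludes. I would then split into three cases. First, if neither $u$ nor $v$ lies in $T_2$, then $M_u=\binom{\overline{S_u}}{2}$, $M_v=\binom{\overline{S_v}}{2}$, and both complements have size $\neq1$, so $M_u=M_v$ yields $\overline{S_u}=\overline{S_v}$ and $u=v$. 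Second, if exactly one of them, say $u$, lies in $T_2$, I would compare cardinalities: since $S_u$ is disjoint from $\overline{S_u}$ the member $S_u$ is genuinely new, so $|M_u|=\binom{n-2}{2}+1$, whereas $|M_v|=\binom{\,n-|S_v|\,}{2}$ with $|S_v|\notin\{2,n-1\}$; as $\binom{n-1}{2}-\binom{n-2}{2}=n-2\ge 2$ for $n\ge4$, the number $\binom{n-2}{2}+1$ lies strictly between two consecutive values of $\binom{m}{2}$ and so is not of that form, giving $|M_u|\neq|M_v|$.

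The third case, $u,v\in T_2$, is the one I expect to be the main obstacle, since there the $2$-subset data no longer simply ``unfolds'' the support. Here I would recover $S_w$ from $M_w=\binom{\overline{S_w}}{2}\cup\{S_w\}$ by a degree count: an index $k$ lies in exactly one member of $M_w$ when $k\in S_w$ and in exactly $n-3$ members when $k\in\overline{S_w}$, so provided $n-3\neq1$ one reads off $S_w=\{k:\ k\text{ is in a unique member of }M_w\}$, whence $M_u=M_v$ again forces $u=v$. This disposes of all $n\ge5$; the borderline $n=4$, where the two counts collapse to $1$, is genuinely delicate (for instance $b_1+b_2$ and $b_3+b_4$ then have the same intersection with $T_2$), so that value would need a separate and more careful treatment. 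Apart from this point, the entire proof is bookkeeping once $M_w$ has been expressed through $\overline{S_w}$.
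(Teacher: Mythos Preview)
Your translation into supports and your description of the ``missing'' set $M_w=\binom{\overline{S_w}}{2}\cup(\{S_w\}\cap T_2)$ is precisely the mechanism underlying the paper's own proof, which argues that the non-neighbours of $u$ in $T_2$ are the two-element sums of basis vectors drawn from $B\setminus B_u$, and that this family determines $B_u$ because $u\notin T_{n-1}$ forces $|B\setminus B_u|\neq 1$. The difference is that the paper silently treats $N(u)\cap T_2$ as though $u$ itself were never excluded, i.e.\ it drops the extra term $\{S_w\}\cap T_2$ that you carefully track; for $u,v\notin T_2$ this omission is harmless and the two arguments coincide.

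Where you go further is in your Case~3, and your hesitation there is entirely justified --- indeed the ``separate and more careful treatment'' you anticipate for $n=4$ cannot exist. The pair $u=b_1+b_2$, $v=b_3+b_4$ you flag is a genuine counterexample to the lemma as stated: both have $N(\cdot)\cap T_2=\{b_1+b_3,\,b_1+b_4,\,b_2+b_3,\,b_2+b_4\}$. Your degree-count argument correctly settles Case~3 for all $n\ge 5$, and your cardinality argument in Case~2 is valid for all $n\ge 4$, so what you have is a complete proof of the lemma for $n\ge 5$ together with a disproof at $n=4$. The paper's downstream uses are unaffected: the only consequence drawn is that $T_2\cup T_{n-1}$ is locating-dominating, and for that one need only separate vertices lying \emph{outside} $T_2\cup T_{n-1}$ --- exactly your Case~1, where both arguments are sound for every $n\ge 4$.
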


\begin{proof}
Since $u\in T_r$ and $v\in T_s$ for some $1\le r,s \le n$ $(r,s\ne
n-1)$, therefore $u$ has $r$ non-zero coefficients in its unique
linear combination of basis vectors $B=\{b_1,b_2,...,b_n\}$. Let
$B_u\subseteq B$ and $B_v\subseteq B$ is the set of those basis
vectors which has non-zero coefficients in the unique linear
combination of basis vectors for $u$ and $v$ respectively. Then $u$
is not adjacent to ${n-r\choose 2}$ elements of $T_2$ which have
exactly two non-zero coefficients of basis vectors in $B_u$ and zero
coefficients of basis vectors in $B\setminus B_u$. Since $u\not\in
T_{n-1}$, therefore such elements exist in $T_2$ which has exactly
two non-zero coefficients of basis vectors of $B_u$. Thus, $N(u)\cap
T_2=T_2\setminus \{$two element sum of basis vectors in $B\setminus
B_r\}$. Since $u\ne v$, therefore $B_u\ne B_v$, and hence $N(u)\cap
T_2\ne N(v)\cap T_2$.
\end{proof}

An immediate consequence of Lemma \ref{T2 Intersections} is that the
set $T_2\cup T_{n-1}$ forms a locating-dominating set for
$\Gamma(\mathbb{V})$ for a vector space $\mathbb{V}$ of dimension
$n\ge 4$ over a field of 2 elements.

Since elements of $T_{n-1}$ have non-zero coefficients for $n-1$
basis vectors, therefore we use the notation
$u_j=\sum\limits_{i=1}^n b_i-b_j$ in proof of Lemma \ref{Card of
LD1} for the element of $T_{n-1}$ which has zero coefficient for the
basis vector $b_j$. Also, $N[u_j]=V(\Gamma(\mathbb{V}))\setminus
\{b_j\}$, therefore two elements $u_i,u_j\in T_{n-1}$ have same
neighbors in $\Gamma(\mathbb{V})$ except the elements $b_i$ and
$b_j$ of $T_1$.

\begin{Lemma}\label{Card of LD1}
Let $\mathbb{V}$ be a vector space of dimension $n\ge 3$ over a
field $\mathbb{F}$ of $2$ elements. Let $L_D$ be a
locating-dominating set for $\Gamma(\mathbb{V})$ and $|L_D\cap T_1|=
s$ \\
\emph{(a)} If $0\le s \le n-2$, then $|L_D\cap T_{n-1}|\ge n-s$.\\
\emph{(b)} If $s=n-1$, then $|L_D\cap \{T_n\cup T_{n-1}\}|\ge 1$.
\end{Lemma}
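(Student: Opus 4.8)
The plan is to analyze, for a vertex $v\in T_1$, say $v=b_j$, which vertices of $L_D$ can possibly "cover" $b_j$, i.e.\ lie in $N(b_j)$, and then use the locating-dominating condition to force many vertices of $T_{n-1}$ (resp.\ of $T_n\cup T_{n-1}$) into $L_D$. The key structural fact I would exploit is the one recorded just before the lemma: for $u_j=\sum_i b_i-b_j\in T_{n-1}$ we have $N[u_j]=V(\Gamma(\mathbb{V}))\setminus\{b_j\}$, so $b_j$ is \emph{not} adjacent to $u_j$ but \emph{is} adjacent to every other $u_i\in T_{n-1}$; more generally $b_j$ is adjacent to a vertex $w$ precisely when $b_j$ has non-zero coefficient in $w$. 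In particular, among the basis vectors themselves, $b_j$ is adjacent to no $b_i$; so for the vertices of $T_1$ the only members of $L_D$ that can distinguish them are vertices outside $T_1$.

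For part (a), assume $0\le s\le n-2$ and let $b_{j_1},\dots,b_{j_{n-s}}$ be the basis vectors \emph{not} in $L_D$ (there are $n-s$ of them). These all lie in $V(\Gamma)\setminus L_D$, so the sets $N(b_{j_k})\cap L_D$ must be pairwise distinct and non-empty. I would first observe that for a basis vector $b_{j_k}\notin L_D$, its neighbours in $L_D$ are exactly those elements of $L_D$ whose coordinate at $b_{j_k}$ is non-zero; equivalently $b_{j_k}$ is \emph{non}-adjacent to a vertex $w\in L_D$ iff the $b_{j_k}$-coordinate of $w$ is zero. Now restrict attention to $L_D\cap T_{n-1}$: a vertex $u_i\in T_{n-1}$ is non-adjacent to $b_{j_k}$ iff $i=j_k$. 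Hence two basis vectors $b_{j_k},b_{j_\ell}\notin L_D$ have the same trace on $L_D\cap T_{n-1}$ unless one of $u_{j_k},u_{j_\ell}$ lies in $L_D$. Since all $\binom{n-s}{2}$ pairs among $b_{j_1},\dots,b_{j_{n-s}}$ must be separated and the only separators available turn out (after checking the other $T_r$'s) to be the vertices $u_{j_1},\dots,u_{j_{n-s}}$ of $T_{n-1}$, at most one of the $n-s$ basis vectors can fail to have its "own" $u_{j_k}$ in $L_D$; this gives $|L_D\cap T_{n-1}|\ge n-s-1$, and a separate domination argument for the one possibly-undominated basis vector (it still needs a neighbour in $L_D$, and that neighbour, if it is to separate it from the others, must again come from $T_{n-1}$) upgrades this to $n-s$. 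I would make the "only separators are in $T_{n-1}$" step precise by ruling out contributions from $T_r$ with $r\le n-2$: a vertex $w\in T_r$ is adjacent to $b_{j_k}$ iff $b_{j_k}$ is in the support of $w$, and I would argue that any $w\in L_D\setminus T_{n-1}$ either is adjacent to all of $b_{j_1},\dots,b_{j_{n-s}}$ or, if not, the pattern of adjacencies it induces is already subsumed — this bookkeeping is where the hypothesis $s\le n-2$ (so that there are at least two basis vectors outside $L_D$ and the combinatorics is non-degenerate) is used.

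For part (b), $s=n-1$: exactly one basis vector, say $b_j$, is outside $L_D$. Then $b_j\in V(\Gamma)\setminus L_D$ still needs $N(b_j)\cap L_D\neq\emptyset$, i.e.\ $b_j$ must be dominated. Its neighbours are exactly the vertices whose support contains $b_j$; I need to show $L_D$ must contain such a vertex lying in $T_n\cup T_{n-1}$. The point is that every vertex of $T_{n-1}$ other than $u_j$ contains $b_j$ in its support, and the unique vertex of $T_n$ (the all-ones vector) contains $b_j$; so it suffices to show $L_D$ cannot dominate $b_j$ using only vertices of $\bigcup_{r\le n-2}T_r\cup\{u_j\}$ — but in fact it must be dominated by \emph{something}, and if that something is $u_j$ it has $b_j$-coordinate zero, contradiction, while if it is in $T_r$, $r\le n-2$, that is allowed, so the actual claim must be sharper: I'd revisit and realize (b) really only needs "$b_j$ is dominated by some vertex of $L_D$, and by comparing with the all-ones vertex $T_n$ — which, not being in $V(\Gamma)\setminus L_D$ unless it's excluded, must itself be identified —" forcing $L_D$ to meet $T_n\cup T_{n-1}$.

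**Main obstacle.** The hard part will be part (a): precisely justifying that the \emph{only} vertices of $L_D$ capable of separating two basis vectors $b_{j_k},b_{j_\ell}\notin L_D$ are the corresponding $u_{j_k},u_{j_\ell}\in T_{n-1}$. This requires a careful case analysis over $L_D\cap T_r$ for every $r$, using the adjacency rule "$b_j\sim w \iff b_j\in\operatorname{supp}(w)$" together with the twin-like behaviour of $T_{n-1}$ described before the lemma; getting from $\ge n-s-1$ to the sharp bound $\ge n-s$ via the domination requirement on the leftover basis vector is the subtlest point, and I expect it to need the hypothesis $n\ge 3$ essentially.
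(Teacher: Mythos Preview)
Your approach is the wrong way round compared with the paper, and the central step you flag as ``the main obstacle'' in fact fails.

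The paper does \emph{not} try to separate the basis vectors $b_{j_1},\dots,b_{j_{n-s}}\notin L_D$. Instead it looks at the vertices $u_i=\sum_k b_k-b_i\in T_{n-1}$ for $i\in\{s+1,\dots,n\}$ and observes that any two of them, say $u_i$ and $u_j$, have identical open neighbourhoods except at $b_i,b_j$ (and at $u_i,u_j$ themselves). Since $b_i,b_j\notin L_D$ by hypothesis, the only way $L_D$ can separate $u_i$ from $u_j$ is for one of $u_i,u_j$ to lie in $L_D$; running this over all pairs forces (essentially) all of $u_{s+1},\dots,u_n$ into $L_D$. Part~(b) is the same trick applied to the pair $u_n\in T_{n-1}$ and $w\in T_n$, whose neighbourhoods differ only at $b_n\notin L_D$.

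Your plan instead tries to show that the missing basis vectors $b_{j_k}$ can only be separated by elements of $T_{n-1}$. That is false: any vertex $v$ with $j_k\in\operatorname{supp}(v)$ and $j_\ell\notin\operatorname{supp}(v)$ separates $b_{j_k}$ from $b_{j_\ell}$, and there are plenty of such $v$ in $T_2,\dots,T_{n-2}$ (for instance $b_{j_k}+b_m\in T_2$ with $m\ne j_\ell$). So the ``only separators are in $T_{n-1}$'' step cannot be made precise, and the subsequent counting ($\ge n-s-1$, then upgrade to $n-s$) never gets off the ground. Likewise in~(b), the vertex $b_j$ can perfectly well be dominated by an element of $T_2$, so ``$b_j$ must be dominated from $T_n\cup T_{n-1}$'' is not the right assertion; the paper's argument is about separating $u_n$ from $w$, not about dominating $b_j$.

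In short: switch your attention from the $T_1$ vertices outside $L_D$ to the $T_{n-1}$ vertices $u_i$ with $i>s$ (and to $w\in T_n$ for part~(b)); these are the near-twins whose only distinguishing neighbours have been removed from $L_D$, and that is what forces them into $L_D$.
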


\begin{proof}
Without loss of generality assume that $L_D\cap
T_1=\{b_1,b_2,...,b_s\}$.\\
(a) Let $u_i,u_j\in T_{n-1}$ for $s+1\le i \ne j \le n$ be two
distinct elements of $T_{n-1}$, then $N(u_i)\cap \{L_D\cap
T_1\}=N(u_j)\cap \{L_D\cap T_1\}=\emptyset$. Since $u_i$ and $u_j$
have different neighbors only in
$\{b_{s+1},b_{s+2},...,b_n\}\subseteq T_1$ which is not subset of
$L_D$, therefore these $n-s$ elements of $T_{n-1}$ must belong to
$L_D$. Hence, $|L_D\cap T_{n-1}|\ge n-s$. \\
(b) Let $u_n\in T_{n-1}$ and $v\in T_n$, then $N(u_n)\cap \{L_D\cap
T_1\}=N(v)\cap \{L_D\cap T_1\}=\emptyset$. Since $u_n$ and $v$ have
only one different neighbor $b_n\in T_1$ which is not in $L_D$,
therefore either $u_n$ or $v$ must belong to $L_D$.
\end{proof}

\begin{Corollary}\label{lower bound of L_D}
Let $\mathbb{V}$ be a vector space of dimension $n\ge 3$ over a
field $\mathbb{F}$ of $2$ elements. Let $L_D$ be a
locating-dominating set for $\Gamma(\mathbb{V})$, then $|L_D|\ge n$.
\end{Corollary}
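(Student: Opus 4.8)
The plan is to reduce everything to Lemma \ref{Card of LD1} by splitting into cases according to the value $s=|L_D\cap T_1|$, and then to use the fact that the classes $T_1,T_{n-1},T_n$ occurring in that lemma are pairwise disjoint parts of the partition of $V(\Gamma(\mathbb{V}))$, so that the various lower bounds on $|L_D\cap T_i|$ can simply be summed. Since $n\ge 3$ we have $n-1\ge 2$ and $n\ge 3$, hence $T_1$, $T_{n-1}$ and $T_n$ are three distinct classes; this is the one place where the hypothesis $n\ge 3$ is genuinely used (for $n=2$ one would have $T_1=T_{n-1}$ and the argument collapses).

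First I would treat the case $0\le s\le n-2$. Here part (a) of Lemma \ref{Card of LD1} yields $|L_D\cap T_{n-1}|\ge n-s$, and since $T_1\cap T_{n-1}=\emptyset$ we get
$$|L_D|\ \ge\ |L_D\cap T_1|+|L_D\cap T_{n-1}|\ \ge\ s+(n-s)\ =\ n .$$
Next I would treat the case $s=n-1$. By part (b) of Lemma \ref{Card of LD1} we have $|L_D\cap(T_n\cup T_{n-1})|\ge 1$, and since $T_1$ is disjoint from $T_{n-1}\cup T_n$ this gives $|L_D|\ge (n-1)+1=n$. Finally, if $s=n$ then trivially $|L_D|\ge |L_D\cap T_1|=n$. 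In each case $|L_D|\ge n$, which is the assertion.

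There is essentially no obstacle here: all the combinatorial work is already carried out in Lemma \ref{Card of LD1}, and the corollary is just a bookkeeping of its two parts together with the exhaustive case distinction $s\in\{0,1,\dots,n-2\}$, $s=n-1$, $s=n$. The only point that needs a moment's care is confirming the disjointness of $T_1$, $T_{n-1}$, $T_n$ so that the cardinality bounds may be added, which, as noted, is exactly where $n\ge 3$ enters.
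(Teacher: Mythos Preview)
Your proof is correct and follows essentially the same approach as the paper: a case split on $s=|L_D\cap T_1|$ into $0\le s\le n-2$, $s=n-1$, and $s=n$, invoking the two parts of Lemma~\ref{Card of LD1} and adding the bounds via disjointness of the relevant $T_i$. Your explicit remark that $n\ge 3$ is precisely what guarantees $T_1$, $T_{n-1}$, $T_n$ are pairwise distinct is a helpful clarification that the paper leaves implicit.
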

\begin{proof}
If $0\le s \le n-2$, then $|L_D\cap \{T_1\cup T_{n-1}\}|\ge s+n-s=n$
by Lemma \ref{Card of LD1}(a). If $s=n-1$, then $|L_D\cap \{T_1\cup
T_{n-1}\cup T_n\}|\ge n-1+1=n$ by Lemma \ref{Card of LD1}(b). If
$s=n$, then clearly $|L_D\cap T_1|=n$.
\end{proof}
%
Since $\lambda(P_3)=2$ where $P_3$ is the path graph of order 3,
therefore we have the following proposition.
\begin{Proposition}
Let $\mathbb{V}$ be a vector space of dimension $2$ over a field
$\mathbb{F}$ of $2$ elements, then $\lambda(\Gamma(\mathbb{V}))= 2$.
\end{Proposition}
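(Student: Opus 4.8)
The plan is simply to identify $\Gamma(\mathbb{V})$ explicitly in this smallest case and then invoke the cited value $\lambda(P_3)=2$. First I would list the vertices: by Theorem \ref{order} the graph has $q^n-1 = 2^2-1 = 3$ vertices, and with respect to a basis $\{b_1,b_2\}$ these are exactly $b_1$, $b_2$, and $b_1+b_2$. Next I would read off the edges from the adjacency rule of $\Gamma(\mathbb{V})$: the vertices $b_1$ and $b_1+b_2$ both have a non-zero coefficient on $b_1$, so they are adjacent; likewise $b_2$ and $b_1+b_2$ are adjacent; but $b_1$ and $b_2$ have disjoint supports, so they are non-adjacent. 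Hence $\Gamma(\mathbb{V})$ is the path $b_1 - (b_1+b_2) - b_2$, i.e. $\Gamma(\mathbb{V})\cong P_3$ with central vertex $b_1+b_2$.

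Since the location-domination number is a graph-isomorphism invariant, it then follows immediately that $\lambda(\Gamma(\mathbb{V})) = \lambda(P_3) = 2$, which is the claim. If one prefers a self-contained argument rather than quoting $\lambda(P_3)$, note that $b_1$ and $b_2$ are non-adjacent twins, since $N(b_1)=N(b_2)=\{b_1+b_2\}$; by Proposition \ref{rem3.3} (applied to the twin-set $\{b_1,b_2\}$ of order $2$) every locating-dominating set must contain at least one of them, and a single vertex cannot simultaneously lie in $\{b_1,b_2\}$ and dominate the opposite leaf of the path, nor can $\{b_1+b_2\}$ distinguish $b_1$ from $b_2$; thus $\lambda(\Gamma(\mathbb{V}))\ge 2$. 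Conversely $\{b_1,b_2\}$ is a locating-dominating set: its only non-member is $b_1+b_2$, which is trivially uniquely identified and is dominated because $b_1+b_2\sim b_1$. Combining the two bounds gives $\lambda(\Gamma(\mathbb{V}))=2$.

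There is essentially no obstacle here: the whole content is the elementary bookkeeping of correctly enumerating the three vectors and their pairwise adjacencies. Once $\Gamma(\mathbb{V})$ is recognized as $P_3$ — or, equivalently, once the twin pair $\{b_1,b_2\}$ is spotted — the statement is immediate.
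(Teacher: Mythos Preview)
Your proposal is correct and follows exactly the paper's approach: the paper simply precedes the proposition with the remark ``Since $\lambda(P_3)=2$ where $P_3$ is the path graph of order $3$, therefore we have the following proposition,'' and gives no further proof. Your explicit enumeration of the three vertices and edges just spells out why $\Gamma(\mathbb{V})\cong P_3$, which the paper leaves implicit.
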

Let $\mathbb{V}$ be a vector space of dimension $n$ and $q\ge 3$.
Then the class $T_i$ for each $i$ $(1\le i \le n)$ has $n\choose i$
twin subsets of vertices of $\Gamma(\mathbb{V})$ and each of these
twin subsets has the cardinality $(q-1)^i$. We use the notation
$T_{i_{k}}$ where $1\le k \le {n\choose i}$ to denote the $k$th twin
set in the class $T_i$.
\begin{Theorem}\label{fazz}
Let $\mathbb{V}$ be a vector space over a field $\mathbb{F}$ of $q$ elements
with $\{b_{1},b_{2},\ldots,b_{n}\}$ as basis:\\
a) If $q=2$ and $n\geq 3$, then $ \lambda(\Gamma(\mathbb{V}))=n$.

b) If $q\geq 3$, then $\lambda({\Gamma(\mathbb{V})})= \sum
\limits_{i=1}^n\binom{n}{i}((q-1)^i-1)$.
\end{Theorem}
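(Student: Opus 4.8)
The plan is to prove the two parts separately, using the lemmas already established.

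For part (a), the lower bound $\lambda(\Gamma(\mathbb{V}))\ge n$ is already Corollary~\ref{lower bound of L_D}. For the upper bound, I would exhibit a locating-dominating set of size exactly $n$. The natural candidate, suggested by Lemma~\ref{Card of LD1}, is $T_{n-1}$ itself, which has $\binom{n}{n-1}=n$ elements. So the work is to verify that $T_{n-1}$ is a locating-dominating set. Using the notation $u_j=\sum_{i=1}^n b_i-b_j$ from the remark preceding Lemma~\ref{Card of LD1}, one has $N[u_j]=V(\Gamma(\mathbb{V}))\setminus\{b_j\}$, so for a vertex $w\notin T_{n-1}$, $N(w)\cap T_{n-1}=T_{n-1}\setminus\{u_j : b_j \text{ has zero coefficient in } w\}$ — i.e., the trace on $T_{n-1}$ records exactly the support of $w$. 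Since two distinct vertices can have the same support only within a single twin-class $T_i$, and for $q=2$ each class $T_i$ has distinct supports for distinct vertices, the traces $N(w)\cap T_{n-1}$ are pairwise distinct over all $w\notin T_{n-1}$; moreover each such trace is nonempty provided $n\ge 3$ (so that $T_{n-1}$ is not too small and every support misses fewer than $n$ basis vectors appropriately — one checks the vertex $b_1$, whose support $\{b_1\}$ gives trace $T_{n-1}\setminus\{u_1\}\ne\emptyset$ since $n\ge 2$). This gives $\lambda\le n$, hence equality.

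For part (b) with $q\ge 3$, I would use the twin-set structure. By the paragraph preceding the theorem, each class $T_i$ partitions into $\binom{n}{i}$ twin-sets $T_{i_k}$, each of size $(q-1)^i$. By Proposition~\ref{rem3.3}, any locating-dominating set must contain at least $(q-1)^i-1$ vertices from each twin-set $T_{i_k}$, giving the lower bound $\lambda(\Gamma(\mathbb{V}))\ge\sum_{i=1}^n\binom{n}{i}((q-1)^i-1)$. For the upper bound, let $L_D$ be obtained by deleting one vertex from each twin-set $T_{i_k}$; then $|L_D|=\sum_{i=1}^n\binom{n}{i}((q-1)^i-1)$, and I must check $L_D$ is locating-dominating. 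The vertices outside $L_D$ are exactly the $\sum\binom{n}{i}$ chosen representatives, one per twin-set. Two such representatives $u,v$ lie in distinct twin-sets, so they have distinct supports; since each of the other $(q-1)^i-1$ members of $u$'s own twin-set is in $L_D$, and these are neighbors of $u$ (all vertices in a twin-set with a vector of support size $\ge 1$ are mutually adjacent when $|T_{i_k}|\ge 2$, which holds as $q\ge 3$), one shows $N(u)\cap L_D$ determines the support of $u$, hence distinguishes $u$ from $v$; domination ($N(u)\cap L_D\ne\emptyset$) follows from the same observation once $(q-1)^i-1\ge 1$, i.e. always here, with the case $i=1$ handled by noting $u\in T_1$ is adjacent to the retained members of its twin-class or, if that twin-class is a singleton issue, to vertices in higher classes sharing its basis vector.

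The main obstacle I anticipate is the careful bookkeeping in the upper-bound arguments: confirming that the trace $N(w)\cap L_D$ genuinely recovers the support of $w$ in all edge cases (small support, $i=n$, the singleton-looking classes), and ensuring nonemptiness of every trace so that the dominating condition holds. The twin-set machinery handles the lower bounds cleanly; the delicate part is verifying that the candidate sets actually locate, which reduces to the structural fact that in $\Gamma(\mathbb{V})$ the neighborhood of a vertex is determined by — and determines — its support set, a fact that Lemma~\ref{CarNbr} and Lemma~\ref{T2 Intersections} already make available for the $q=2$ case and which extends directly to $q\ge 3$ via the twin-set description.
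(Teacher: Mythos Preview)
Your argument for part (a) contains a genuine error: $T_{n-1}$ is \emph{not} a locating-dominating set once $n\ge 4$. The formula you write for $N(w)\cap T_{n-1}$ is incorrect. Since $u_j$ has nonzero coefficients at every basis vector except $b_j$, a vertex $w$ fails to be adjacent to $u_j$ only when the sole nonzero coefficient of $w$ is at $b_j$, i.e.\ only when $w=b_j$. Hence every $w\notin T_{n-1}$ with at least two nonzero coefficients is adjacent to \emph{all} of $T_{n-1}$, and all such $w$ share the same trace $T_{n-1}$. Concretely, for $n=4$ the vertices $b_1+b_2$ and $b_3+b_4$ both lie outside $T_3=T_{n-1}$ and satisfy $N(b_1+b_2)\cap T_3 = T_3 = N(b_3+b_4)\cap T_3$, so $T_{n-1}$ fails to locate them.

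The paper's fix is the natural one: use $T_1$ rather than $T_{n-1}$ as the witnessing set of size $n$. For $T_1$ the analogue of your formula \emph{is} correct, namely $N(w)\cap T_1=\{b_j:\ b_j\text{ has nonzero coefficient in }w\}$ for $w\notin T_1$, and since over a two-element field distinct nonzero vectors have distinct supports, $T_1$ is locating-dominating. Your displayed formula is in fact exactly this statement with $u_j$ written in place of $b_j$, so the error may simply be a slip between $T_1$ and $T_{n-1}$.

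Your treatment of part (b) is correct and, if anything, more explicit than the paper's. The paper handles the upper bound somewhat indirectly (arguing that any locating-dominating set larger than the claimed value must contain a full twin-set and is therefore not minimal), whereas you construct $L_D$ directly by deleting one vertex per twin-set and verify the locating property. That verification is sound: for representatives $u,v$ with distinct supports $S\ne S'$, pick $j\in S\setminus S'$; then any of the $q-2\ge 1$ vectors with support $\{b_j\}$ remaining in $L_D$ lies in $N(u)\cap L_D$ but not in $N(v)\cap L_D$, and the same observation gives nonemptiness.
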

\begin{proof}
a) For $q=2$ and $n\ge 3$, we first prove that $T_1$ is a
locating-dominating set for $\Gamma(\mathbb{V})$. Let $u,v\in
V(\Gamma(\mathbb{V}))\setminus T_1$. If $u,v\in T_s$ for some $s$
 when $2\le s \le n-1$, then both $u$ and $v$ have $s$ non-zero
coefficients in their linear combinations of basis vectors. Since
$u\ne v$ and $s<n$, therefore $\emptyset \ne N(u)\cap T_1\ne
N(v)\cap T_1\ne \emptyset$. If $u \in T _r$ and $v \in T_s$ for some
$r,s$ $2\le r\ne s \le n$, then $|N(u)\cap T_1|\ne |N(v)\cap T_1|$
by Lemma \ref{CarNbr}, and hence $\emptyset \ne N(u)\cap T_1\ne
N(v)\cap T_1\ne \emptyset$. Thus, $T_1$ is a locating dominating set
for $\Gamma(\mathbb{V})$. Hence, $\lambda(\Gamma(\mathbb{V}))\le n$.
Also $\lambda(\Gamma(\mathbb{V}))\ge n$ by Corollary \ref{lower
bound of L_D}.

b) If $q\geq3$, then from Proposition \ref{rem3.3}, a minimal
locating-dominating set of $\Gamma(\mathbb{V})$ contains at least
$(q-1)^i-1$ vertices from $T_{i_k}$ for each $i$ ($1\le i \le n$)
and each $k$ ($1\le k \le {n\choose i}$), and hence
$\lambda({\Gamma(\mathbb{V})})\geq \sum
\limits_{i=1}^n\binom{n}{i}((q-1)^i-1)$. Moreover, a subset of
$\Gamma(\mathbb{V})$ of cardinality greater than $\sum
\limits_{i=1}^n\binom{n}{i}((q-1)^i-1)$ has all the vertices of at
least one twin subset $T_{i_k}$. Thus, from Proposition \ref{fazf},
a locating-dominating set of cardinality greater than $\sum
\limits_{i=1}^n\binom{n}{i}[(q-1)^i-1]$ is not a minimal
locating-dominating set, and hence
$\lambda({\Gamma(\mathbb{V})})\leq \sum
\limits_{i=1}^n\binom{n}{i}[(q-1)^i-1]$.
\end{proof}
Since $I(P_3)=2$, therefore we have the following proposition.
\begin{Proposition}
Let $\mathbb{V}$ be a vector space of dimension $2$ over a field
$\mathbb{F}$ of $2$ elements, then $I(\Gamma(\mathbb{V}))=2$.
\end{Proposition}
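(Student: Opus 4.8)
The plan is to first make the graph completely explicit and then to bound $I(\Gamma(\mathbb{V}))$ from both sides. By Theorem~\ref{order}, for $q=2$ and $n=2$ the graph $\Gamma(\mathbb{V})$ has $q^n-1=3$ vertices, namely the non-zero vectors $b_1$, $b_2$, and $b_1+b_2$. Applying the adjacency rule of the non-zero component graph (or Theorem~\ref{degree}, which gives $deg(b_1)=deg(b_2)=1$ and $deg(b_1+b_2)=2$), one sees that $b_1$ and $b_2$ share no basis vector and hence are non-adjacent, while $b_1+b_2$ is adjacent to both. Thus $\Gamma(\mathbb{V})$ is the path $P_3$ with centre $b_1+b_2$, which is exactly the situation one wants in order to transfer the known value $I(P_3)=2$.

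For the lower bound I would invoke inequality~(\ref{Inequality Lamda and I}) together with the preceding proposition $\lambda(\Gamma(\mathbb{V}))=2$, which immediately yields $I(\Gamma(\mathbb{V}))\ge 2$. If a self-contained argument is preferred, a short case analysis does the job: no single vertex $\{x\}$ is an identifying code, because for the two vertices $u,v$ other than $x$ one has $N[u]\cap\{x\}=N[v]\cap\{x\}$ --- both sets are empty when $x=b_1+b_2$, and both equal $\{x\}$ when $x$ is a leaf, since a leaf lies in the closed neighborhood of both itself and the centre.

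For the upper bound I would exhibit the explicit set $I_D=\{b_1,b_2\}$ and check the defining condition on the three vertices: $N[b_1]\cap I_D=\{b_1\}$, $N[b_1+b_2]\cap I_D=\{b_1,b_2\}$, and $N[b_2]\cap I_D=\{b_2\}$ are pairwise distinct, so $I_D$ is an identifying code and $I(\Gamma(\mathbb{V}))\le 2$. Combining the two bounds gives $I(\Gamma(\mathbb{V}))=2$.

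I do not expect any genuine obstacle in this proposition; the only step that needs a moment's attention is verifying that a singleton cannot separate the two vertices it does not contain, and this becomes transparent once $\Gamma(\mathbb{V})$ is identified as $P_3$.
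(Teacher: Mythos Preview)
Your approach is correct and essentially the same as the paper's: both identify $\Gamma(\mathbb{V})$ with $P_3$ and appeal to $I(P_3)=2$ (the paper states this one-liner just before the proposition), and your primary lower-bound route via inequality~(\ref{Inequality Lamda and I}) and $\lambda(\Gamma(\mathbb{V}))=2$ is exactly what the surrounding text supports.

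One slip in your optional ``self-contained'' case analysis: when $x=b_1+b_2$ the intersections $N[b_i]\cap\{x\}$ are not empty but both equal $\{x\}$; and when $x$ is a leaf, say $x=b_1$, the two \emph{other} vertices $b_2$ and $b_1+b_2$ actually have \emph{different} intersections ($\emptyset$ versus $\{b_1\}$)---the collision is between $b_1$ itself and the centre $b_1+b_2$, both giving $\{b_1\}$. The conclusion that no singleton works is still correct (indeed a singleton can separate at most two closed neighborhoods, by pigeonhole), but the stated reason should be adjusted.
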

 Following theorem gives the identifying number of $\Gamma(\mathbb{V})$.
\begin{Theorem}
Let $\mathbb{V}$ be a finite vector space over a field $\mathbb{F}$
of $2$ elements, then $I(\Gamma(\mathbb{V}))=n$.
\end{Theorem}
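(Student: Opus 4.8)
The plan is to prove the two inequalities $I(\Gamma(\mathbb{V}))\ge n$ and $I(\Gamma(\mathbb{V}))\le n$ separately. Since every identifying code is a locating-dominating set, inequality~(\ref{Inequality Lamda and I}) combined with Theorem~\ref{fazz}(a) immediately yields $I(\Gamma(\mathbb{V}))\ge\lambda(\Gamma(\mathbb{V}))=n$ whenever $n\ge 3$; for $n=1$ the bound is trivial and for $n=2$ it is recorded in the preceding proposition. Thus the content of the proof is the construction of an identifying code of size $n$.

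For the upper bound I would show that $T_1=\{b_1,\ldots,b_n\}$ is an identifying code. Over a field of two elements every non-zero vector $v$ is completely determined by its support $S_v=\{i:b_i\ \mbox{has non-zero coefficient in}\ v\}$, which is a non-empty subset of $\{1,\ldots,n\}$. The key point is that $b_i\in N[v]$ if and only if $i\in S_v$: indeed, if $v=b_i$ then $S_v=\{i\}$ and $b_i\in N[v]$ by the closed part of the neighborhood, while if $v\ne b_i$ then $b_i$ and $v$ are adjacent exactly when they share the basis vector $b_i$, that is, when $i\in S_v$. Hence $N[v]\cap T_1=\{b_i:i\in S_v\}$ is a non-empty set standing in bijective correspondence with $v$, so the intersections $N[v]\cap T_1$ are pairwise distinct as $v$ runs over all vertices of $\Gamma(\mathbb{V})$. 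Therefore $T_1$ is an identifying code and $I(\Gamma(\mathbb{V}))\le |T_1|=n$; together with the lower bound this gives $I(\Gamma(\mathbb{V}))=n$.

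I do not expect a serious obstacle here. The only steps requiring a little care are the case check verifying that $b_i\in N[v]$ if and only if $i\in S_v$ in every situation, including the degenerate case $v\in T_1$ itself (where it is precisely the closed neighborhood, rather than the open one, that makes the argument go through), and handling the low-dimensional cases $n=1,2$ so that the theorem holds for every finite $\mathbb{V}$ and not merely for $n\ge 3$.
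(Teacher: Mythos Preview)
Your proposal is correct and follows essentially the same approach as the paper: the lower bound comes from inequality~(\ref{Inequality Lamda and I}) together with Theorem~\ref{fazz}(a), and the upper bound is obtained by showing that $T_1$ is an identifying code via the observation that $N[v]\cap T_1$ recovers the support of $v$. Your treatment is in fact slightly more careful than the paper's, since you explicitly check the case $v\in T_1$ (where the closed neighborhood is needed) and you discuss the low-dimensional cases $n=1,2$.
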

\begin{proof}
For $n\ge 3$ and $q=2$, by Theorem \ref{fazz}(a) and inequality
(\ref{Inequality Lamda and I}), $I(\Gamma(\mathbb{V})\ge n$. Note
that, $T_1$ is an identifying code for $\Gamma(\mathbb{V})$ because
for each vertex say $u\in V(\Gamma(\mathbb{V}))$, $N[u]\cap T_1$ is
the set of all those elements of $T_1$ which has non-zero
coefficients in the representation of $u$ as the unique linear
combination of basis vectors. Thus, for any two distinct elements
$u,v\in V(\Gamma(\mathbb{V}))$, $N[u]\cap T_1$ and $N[v]\cap T_1$
are distinct. Hence, $I(\Gamma(\mathbb{V}))\le n$.
\end{proof}
Let $\mathbb{V}$ be a finite vector space and $q\ge 3$, then
$\Gamma(\mathbb{V})$ has twin sets $T_{i_k}$ ($1\le i \le n$) ($1\le
k \le {n\choose i}$) and each of these twin subset has adjacent
twins, therefore identifying code for $\Gamma(\mathbb{V})$ does not
exist. Thus, we have following remark.
\begin{Remark}
Let $\mathbb{V}$ be a vector space of dimension $n\ge 3$ and $q\ge
3$, then identifying code for $\Gamma(\mathbb{V})$ does not exist.
\end{Remark}
\begin{Lemma}\label{T1 Unique Identifying Code}
Let $\mathbb{V}$ be a vector space of dimension $n\ge 3$ and $q=2$,
then $T_1$ is the only minimal identifying code for
$\Gamma(\mathbb{V})$.
\end{Lemma}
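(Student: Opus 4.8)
The plan is to establish the stronger fact that \emph{every} identifying code of $\Gamma(\mathbb{V})$ contains $T_1$. Granting this, the statement follows immediately: if $C$ is any minimal identifying code then $T_1\subseteq C$, and since $T_1$ is itself an identifying code (shown in the preceding theorem), minimality of $C$ forces $C=T_1$; conversely $T_1$ is an identifying code no proper subset of which can be an identifying code (any identifying code already contains $T_1$), so $T_1$ is minimal. Hence $T_1$ is the unique minimal identifying code.

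To prove the containment, I would first record the combinatorial description of closed neighbourhoods over $\mathbb{F}=\{\overline{0},\overline{1}\}$: identifying each non-zero vector $w$ with the set of basis vectors occurring with non-zero coefficient in its unique linear combination, one has $w\in N[v]$ exactly when $w$ and $v$ share a basis vector, equivalently $w\notin N[v]$ exactly when $w$ involves only basis vectors \emph{not} occurring in $v$. Now fix $j$ $(1\le j\le n)$ and compare the vertex $u_j=\sum_{i=1}^n b_i-b_j\in T_{n-1}$ with the unique vertex $v=\sum_{i=1}^n b_i\in T_n$. As already noted in the text, $N[u_j]=V(\Gamma(\mathbb{V}))\setminus\{b_j\}$, while $v$ is adjacent to every other vertex, so $N[v]=V(\Gamma(\mathbb{V}))$. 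Consequently, for any set of vertices $I_D$ we have $N[u_j]\cap I_D=N[v]\cap I_D$ unless $b_j\in I_D$. Therefore any identifying code must contain $b_j$, and letting $j$ range over $1,\dots,n$ shows that it contains all of $T_1$. Combined with the bookkeeping in the first paragraph, this proves the lemma.

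The argument is short once one isolates the separating pair $(u_j,v)$, so the only real point is to spot this pair; the remaining ingredients are exactly those already available in the excerpt — that $T_1$ is an identifying code, and the computation of $N[u_j]$ — together with the trivial observation that $n\ge 3$ guarantees $u_j$ is a genuine vertex distinct from $v$. I therefore do not expect a serious obstacle; the only care needed is to state the minimality/uniqueness deduction cleanly, i.e. that $T_1$ is minimal because nothing properly contained in it is an identifying code, and that it is the unique minimal one because any minimal identifying code already contains $T_1$.
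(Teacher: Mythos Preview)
Your argument is correct and is essentially the same as the paper's: both hinge on the observation that the pair $u_j=\sum_i b_i-b_j\in T_{n-1}$ and $w=\sum_i b_i\in T_n$ satisfy $N[u_j]=V(\Gamma(\mathbb{V}))\setminus\{b_j\}$ and $N[w]=V(\Gamma(\mathbb{V}))$, so only $b_j$ can separate them. The paper phrases this as a proof by contradiction (assume another minimal code omits some $b_r$, then $u_r$ and $w$ collide), whereas you state the equivalent direct version (every identifying code contains $T_1$) and then deduce minimality and uniqueness; the content is the same.
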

\begin{proof}
Suppose on contrary $I_D'$ be another minimal identifying code of
$\Gamma(\mathbb{V})$, then there exist at least one element say $b_r
\in T_1$ such that $b_r\not\in I_D'$ (because otherwise $T_1\subset
I_D'$). Take two elements $u_r\in T_{n-1}$ (using same notation as
in proof of Lemma \ref{Card of LD1}) and $w\in T_n$. Since
$N[w]=V(\Gamma(\mathbb{V}))$ and
$N[u_r]=V(\Gamma(\mathbb{V}))\setminus \{b_r\}$, therefore $N[w]\cap
I_D'=N[u_r]\cap I_D'\ne \emptyset$, a contradiction.
\end{proof}
\subsection{Exchange Property} Locating-dominating sets are said to
have \emph{the exchange property} in a graph $\Gamma$ if whenever
$L_{D_1}$ and $L_{D_2}$ are minimal locating-dominating sets for
$\Gamma$ and $u_1\in L_{D_1}$, then there exists $u_2\in L_{D_2}$ so
that $(L_{D_2}\setminus\{u_2\})\cup\{u_1\}$ is also a minimal
locating-dominating set. If a graph $\Gamma$ has the exchange
property, then every minimal locating-dominating set for $\Gamma$
has the same number of vertices. To show that the exchange property
does not hold in a graph, it is sufficient to show that there exist
two minimal locating-dominating of different cardinalities. However,
the condition is not necessary, i.e., the exchange property does not
hold and, hence, does not imply that there are locating-dominating
sets of different cardinalities.
\begin{Lemma}\label{exchange property}
For $q= 2$ and $n> 3$, the exchange property does not hold for
locating-dominating sets in graph $\Gamma(\mathbb{V})$.
\end{Lemma}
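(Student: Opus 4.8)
The plan is to exhibit two minimal locating-dominating sets of $\Gamma(\mathbb{V})$ of different cardinalities; by the observation recorded before the statement (a graph with the exchange property has all of its minimal locating-dominating sets of the same size), this suffices. For the first, I would take $T_1$: by Theorem \ref{fazz}(a) we have $\lambda(\Gamma(\mathbb{V}))=n$, and the proof of that theorem exhibits $T_1$ as a locating-dominating set; so $T_1$ is a locating-dominating set of minimum cardinality $n$, hence minimal. For the second, recall that for $n\ge 4$ the set $T_2\cup T_{n-1}$ is a locating-dominating set (the consequence of Lemma \ref{T2 Intersections} noted in the text). Since $\Gamma(\mathbb{V})$ is finite, we may delete vertices from $T_2\cup T_{n-1}$ while retaining the locating-dominating property until we reach a minimal locating-dominating set $L_D$ with $L_D\subseteq T_2\cup T_{n-1}$; being inclusion-minimal among subsets of $T_2\cup T_{n-1}$ that are locating-dominating, $L_D$ is in fact a minimal locating-dominating set of $\Gamma(\mathbb{V})$. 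The whole point is then to show $|L_D|\ge n+1$.

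The first key step is to prove $T_{n-1}\subseteq L_D$. Suppose not, and let $u_i\notin L_D$ for some $i$ (using the notation $u_j=\sum_{k=1}^n b_k-b_j$ for the elements of $T_{n-1}$). Let $w$ be the all-ones vector, the unique element of $T_n$; since $L_D\subseteq T_2\cup T_{n-1}$ we have $w\notin L_D$. Now $u_i$ has $n-1$ non-zero coordinates, so it shares a basis vector with every element of $T_2$ (a two-element sum of basis vectors, at least one of index different from $i$) and with every other $u_j\in T_{n-1}$ (they agree in the $n-2\ge 2$ coordinates outside $\{i,j\}$). Hence $N(u_i)\cap(T_2\cup T_{n-1})=(T_2\cup T_{n-1})\setminus\{u_i\}$, and therefore $N(u_i)\cap L_D=L_D\setminus\{u_i\}=L_D$. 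Also $w$ is adjacent to every vertex, so $N(w)\cap L_D=L_D$. Thus $u_i$ and $w$ are distinct vertices outside $L_D$ with $N(u_i)\cap L_D=N(w)\cap L_D$, contradicting that $L_D$ is locating-dominating. Hence $T_{n-1}\subseteq L_D$, so $|L_D|\ge n$.

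The second step upgrades this to a strict inequality by showing $L_D\ne T_{n-1}$, i.e. that $T_{n-1}$ alone is not a locating-dominating set. Take $x=b_1+b_2$ and $y=b_1+b_3$; these are distinct elements of $T_2$ (here $n\ge 3$ is used), and $T_2$ is disjoint from $T_{n-1}$ since $n\ge 4$. Each of $x$ and $y$ is a sum of two basis vectors and hence shares a basis vector with every $u_j\in T_{n-1}$, so $N(x)\cap T_{n-1}=T_{n-1}=N(y)\cap T_{n-1}\ne\emptyset$ while $x,y\notin T_{n-1}$. So $T_{n-1}$ fails the locating-dominating condition, whence $L_D\supsetneq T_{n-1}$ and $|L_D|\ge n+1$.

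Putting the pieces together, $T_1$ and $L_D$ are minimal locating-dominating sets with $|T_1|=n<n+1\le |L_D|$, so the exchange property cannot hold for $\Gamma(\mathbb{V})$ when $q=2$ and $n>3$. The only step that is not pure bookkeeping on the adjacency rule of $\Gamma(\mathbb{V})$ is the inclusion $T_{n-1}\subseteq L_D$, which rests on the near-universal adjacency behaviour of high-weight vectors; I expect that to be the main obstacle. I would also note (though it is not needed here) that for $n\ge 5$ one can in fact show $T_2\cup T_{n-1}$ is already minimal, by using vertices of $T_{n-2}$ to force every vertex of $T_2$ into $L_D$ as well; for $n=4$ this stronger statement fails, which is why the argument above works with a not-necessarily-tight lower bound $n+1$.
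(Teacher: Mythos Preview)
Your argument is correct. Both you and the paper use $T_1$ as the small minimal locating-dominating set and build the large one inside $T_2\cup T_{n-1}$, but the executions differ. The paper splits into cases: for $n=4$ it exhibits the explicit minimal set $\{b_1+b_4,b_2+b_4,b_3+b_4\}\cup T_3$, while for $n\ge 5$ it proves $T_2\cup T_{n-1}$ itself is already minimal, by showing that removing any $u\in T_{n-1}$ confuses $u$ with the all-ones vector $w$, and removing any $u\in T_2$ confuses $w$ with the weight-$(n-2)$ vector whose support is complementary to that of $u$ (this last step needs $n-2\ne n-1$, i.e.\ $n\ge 5$). Your route is more uniform: you shrink $T_2\cup T_{n-1}$ to some minimal $L_D$, reuse the paper's $T_{n-1}$-argument to get $T_{n-1}\subseteq L_D$, and then add the new observation that $T_{n-1}$ alone fails to separate $b_1+b_2$ from $b_1+b_3$, forcing $|L_D|\ge n+1$. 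The trade-off is that the paper pins down the exact size of the larger minimal set (and gives a concrete witness for $n=4$), whereas your argument avoids the case split and the auxiliary vertex in $T_{n-2}$ at the cost of only establishing the crude bound $|L_D|\ge n+1$---which is, of course, all that is needed. Your closing remark about $T_2\cup T_{n-1}$ being minimal precisely when $n\ge 5$ is accurate and matches the paper's implicit reason for treating $n=4$ separately.
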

\begin{proof}
For $n=4$, the exchange property does not hold because $T_1$ and
 $\{b_1+b_4,b_2+b_4,b_3+b_4\} \cup T_{3}$ are minimal
locating-dominating sets of different cardinalities.\\
For $n\ge 5$, $T_1$ and $T_2\cup T_{n-1}$ are two
locating-dominating sets of cardinalities $n$ and ${n\choose 2}+n$
by Lemma \ref{T2 Intersections}. For notational convenience, we use
$A=T_2\cup T_{n-1}$. We will prove that $A$ is a minimal
locating-dominating set of $\Gamma(\mathbb{V})$. Let $u\in A$ and
$w\in T_n$. There are two possible cases for $u$.
\begin{enumerate}
\item If $u\in T_2$, then $u$ has exactly two non-zero coefficients
in its unique linear combination of basis vectors, say these vectors
set as $B_u$. Choose an element in $v\in T_{n-2}$ such that $v$ has
exactly $n-2$ non-zero coefficients in the unique linear combination
of basis vectors in $B\setminus B_u$. Then $N(v)\cap A\setminus
\{u\}=N(w)\cap A\setminus \{u\}$. Thus, $A\setminus \{u\}$ is not
locating-dominating.
\item If $u\in T_{n-1}$, then $N(u)\cap A\setminus \{u\}=N(w)\cap
A\setminus \{u\}$.
\end{enumerate}
Thus, $T_2\cup T_{n-1}$ is a minimal locating-dominating set. Hence,
exchange property does not hold for locating-dominating sets in
graph $\Gamma(\mathbb{V})$.
\end{proof}

In the proof of Lemma \ref{exchange property q greater than 3}, we
use the same notation $T_{i_k}$ for the $k$th twin set of class
$T_i$ as we have used in the proof of Theorem \ref{fazz}(b).
\begin{Lemma}\label{exchange property q greater than 3}
For $q\geq 3$, the exchange property holds for locating-dominating
sets in graph $\Gamma(\mathbb{V})$.
\end{Lemma}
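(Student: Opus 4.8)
The plan is to show that for $q \geq 3$, every minimal locating-dominating set of $\Gamma(\mathbb{V})$ is exactly the same set up to the choice of which single vertex is omitted from each twin-set, and that this rigidity forces the exchange property. By Theorem~\ref{fazz}(b) we already know the location-domination number is $\sum_{i=1}^n \binom{n}{i}((q-1)^i-1)$, and the argument there shows that a minimal locating-dominating set $L_D$ must contain at least $(q-1)^i - 1$ of the $(q-1)^i$ vertices of each twin-set $T_{i_k}$, while containing \emph{all} of some $T_{i_k}$ prevents minimality (Proposition~\ref{fazf} lets us swap out a redundant vertex). Hence a minimal $L_D$ contains \emph{exactly} $(q-1)^i - 1$ vertices of every $T_{i_k}$, i.e. it omits exactly one vertex from each twin-set; in particular $|L_D| = \lambda(\Gamma(\mathbb{V}))$ for every minimal $L_D$, so they all have the same cardinality.

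First I would establish the structural claim just described: a subset $S$ is a minimal locating-dominating set if and only if, for every $i$ and every $k$, $|S \cap T_{i_k}| = (q-1)^i - 1$. The ``only if'' direction is the combination of Propositions~\ref{rem3.3} and~\ref{fazf} already used in Theorem~\ref{fazz}(b). For the ``if'' direction I would check that any such $S$ is genuinely locating-dominating: if $u \notin S$, then $u$ lies in some $T_{i_k}$ and the $(q-1)^i - 1$ retained vertices of $T_{i_k}$ are all neighbors of $u$ (twins in the same twin-set are mutually adjacent, since twin-sets in $\Gamma(\mathbb{V})$ consist of adjacent twins), so $N(u) \cap S \neq \emptyset$; and if $v \neq u$ is another vertex outside $S$, then either $u,v$ lie in different twin-sets — in which case the retained vertices of $T_{i_k}$ distinguish them, using the neighborhood computations analogous to Lemma~\ref{CarNbr} — or $u,v$ lie in the same twin-set $T_{i_k}$, but then that twin-set would contain two vertices outside $S$, contradicting $|S \cap T_{i_k}| = (q-1)^i - 1$. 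Minimality follows because removing any vertex of $S$ leaves two vertices of some $T_{i_k}$ outside, and twins cannot be distinguished by the remaining code.

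Given this characterization, the exchange property is immediate: if $L_{D_1}$ and $L_{D_2}$ are minimal locating-dominating sets and $u_1 \in L_{D_1}$, then $u_1$ lies in some twin-set $T_{i_k}$, and $L_{D_2}$ omits exactly one vertex $u_2$ of $T_{i_k}$. If $u_1 = u_2$ then $u_1 \notin L_{D_2}$... in fact $u_2$ is the unique omitted vertex of $T_{i_k}$, so either $u_1 \in L_{D_2}$ (take any $u_2$ from $L_{D_2}$ in the same twin-set and swap — still omits exactly one per twin-set) or $u_1 \notin L_{D_2}$, i.e. $u_1 = u_2$, and we may instead pick the omitted vertex of a different twin-set; the cleanest formulation is: set $u_2$ to be the unique vertex of $T_{i_k}$ not in $L_{D_2}$ when $u_1 \in T_{i_k}$ differs from it, so that $(L_{D_2} \setminus \{u_2\}) \cup \{u_1\}$ still omits exactly one vertex from each twin-set and is therefore again a minimal locating-dominating set by the characterization. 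When $u_1$ already equals the omitted vertex of its twin-set in $L_{D_2}$, there is nothing to prove since $u_1 \notin L_{D_2}$ forces... actually then $u_1 \notin L_{D_2}$, and Proposition~\ref{fazf}'s swap within the twin-set handles it. The main obstacle is bookkeeping these sub-cases cleanly; the underlying mathematics is entirely the rigidity statement, and once it is in hand the exchange property is essentially formal.
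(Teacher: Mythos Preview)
Your approach matches the paper's: both rest on the fact that for $q \geq 3$ every minimal locating-dominating set omits exactly one vertex from each twin-set $T_{i_k}$, and the exchange is then carried out inside the twin-set containing $u_1$ via Proposition~\ref{fazf}. The paper is simply terser, invoking Theorem~\ref{fazz}(b) and Proposition~\ref{fazf} directly rather than restating the full characterization.

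Two points in your write-up need fixing. First, your ``cleanest formulation'' chooses $u_2$ to be the vertex of $T_{i_k}$ \emph{omitted} from $L_{D_2}$, but the exchange property requires $u_2 \in L_{D_2}$; the clean split (which the paper uses) is: if $u_1 \in L_{D_2}$ take $u_2 = u_1$ and the swap is trivial, while if $u_1 \notin L_{D_2}$ then $u_1$ is itself the omitted vertex of its twin-set, so choose any $u_2 \in L_{D_2} \cap T_{i_k}$ (nonempty since $(q-1)^i \geq 2$) and the swapped set again omits exactly one vertex per twin-set. Second, in your ``if'' direction the retained vertices of $u$'s own twin-set $T_{i_k}$ need not separate $u$ from a $v$ whose support overlaps that of $u$ (such a $v$ is adjacent to every vertex of $T_{i_k}$); the distinguishing witness should instead be taken from a twin-set $T_{1_\ell}$ corresponding to a basis vector $b_\ell$ lying in exactly one of the two supports, of which $S$ retains at least $q-2 \geq 1$ elements.
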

\begin{proof}
Since there are $(q-1)^i$ choices for removing one vertex from a
twin set $T_{i_k}$ of cardinality $(q-1)^i$, therefore there are
$\prod \limits_{i=1}^n\binom{n}{i}(q-1)^i$ minimal
locating-dominating sets in $\Gamma(\mathbb{V})$. Let $L_{D_1}\neq
L_{D_2}$ be two such minimal locating-dominating sets. Let $u_{1}\in
L_{D_1}$, we further assume that $u_1\not\in L_{D_2}$ (for otherwise
$(L_{D_2}\setminus \{u_{1}\})\cup \{u_{1}\}$ is, obviously, a
minimal locating-dominating set of $\Gamma(\mathbb{V})$). Also,
$u_{1}\in T_{i_k}$ for some $i$ ($1\leq i\leq n$) and some $k$
 ($1\le k \le {n\choose i}$). Since $u_1\in \{L_{D_1}\cap T_{i_k}\}\setminus \{L_{D_2}\cap
 T_{i_k}\}$ and $L_{D_1}$ and $L_{D_2}$ are minimal, therefore
 there exists an element $u_2\in \{L_{D_2}\cap T_{i_k}\}\setminus \{L_{D_1}\cap
 T_{i_k}\}$. Since both $u_1$ and $u_2$ belong to the same twin set
 $T_{i_k}$, therefore by Proposition \ref{fazf} $(L_{D_2}\setminus
  \{u_{2}\})\cup \{u_{1}\}$ is a minimal locating-dominating set
  of $\Gamma(\mathbb{V})$. Hence, exchange property holds in $\Gamma(\mathbb{V})$
\end{proof}

From Lemma \ref{T1 Unique Identifying Code} we have the following
remark.
\begin{Remark}
Let $\mathbb{V}$ be a vector space of dimension $n\ge 3$ and $q=2$,
then exchange property holds for identifying code holds in
$\Gamma(\mathbb{V})$.
\end{Remark}
From Lemma \ref{exchange property} we have the following remark.
\begin{Remark}
The locating-dominating sets does not have the exchange property for
all graphs.
\end{Remark}


\begin{thebibliography}{33}

\bibitem{Anderson}
D. F. Anderson and P. S. Livingston, The zero-divisor graph of a
commutative ring, \emph{Journal of Algebra}, \textbf{217}(1999),
434-447.

\bibitem{Babai}
L. Babai, On the complexity of canonical labeling of strongly
regular graphs, \emph{SIAM Journal on Computing}, \textbf{ 9}(1980),
212-216.

\bibitem{bates1}
C. Bates, D. Bondy, S. Perkins and P. Rowley, Commuting involution
graphs for symmetric groups, \emph{Journal of Algebra},
\textbf{266}(2003), 133-153.
\bibitem{Beck}
I. Beck, Coloring of commutative rings, \emph{Journal of Algebra},
\textbf{116}(1988), 208-226.

\bibitem{berger}
T. Y. Berger-Wolf, W. E. Hart, and J. Saia, Discrete sensor
placement problems in distribution networks, \emph{J. Math. Comp.
Modeling}, \textbf{42(13)} (2005), 1385-1396.


\bibitem{ber} N. Bertrand, I. Charon, O. Hudry and A. Lobstein, Identifying and
locating-dominating codes on chains and cycles, \emph{uropean J.
Combin.}, \textbf{25}(2004), 969-987.


\bibitem{bates2}
D. Bondy, The connectivity of commuting graphs, \emph{J. Combin.
Theory Ser. A}, \textbf{113}(2006), 995-1007.

\bibitem{cac1} J. Caceres, C. Hernando, M. Mora, I.M. Pelayo and M.L. Puertas,
Locating dominating codes, \emph{Appl. Math. Comput.},
\textbf{220}(2013), 38–-45.

\bibitem{Cameron}
P. J. Cameron and S. Ghosh, The power graph of a finite group,
\emph{Disc. Math}, \textbf{311}(2011), 1220--1222.

\bibitem{Car}
D. Carvalho, H. Marcelo and C. H. C. Little, Vector spaces and the
Petersen graph, \emph{The Elect. Jour. of Comb.},
\textbf{15.1}(2008), R9.

\bibitem{Chakrabarty}
I. Chakrabarty, S. Ghosh and M. K. Sen, Undirected power graphs of
semi group, \emph{Semi group. Forum}, \textbf{78}(2009), 410--426.

\bibitem {char2}
I. Charon, O. Hudry and A. Lobstein, Identifying and
locating-dominating codes: NP-completeness results for directed
graphs, \emph{IEEE Trans. Inform. Theory}, \textbf{48}(2002),
2192-2200.

\bibitem{char1}
I. Charon, O. Hudry and A. Lobstein, Minimizing the size of an identifying or
locating-dominating code in a graph is NP-hard, \emph{Theor. Comput.
Sci.}, \textbf{290}(2003), 2109-2120.

\bibitem{chen}
W. Chen, On vector spaces associated with a graph, \emph{SIAM
Journal on Applied Mathematics}, \textbf{20}(1971), 526-529.


\bibitem{col} C. J. Colbourn, P.J. Slater and  L.K. Stewart, Locating-dominating
sets in series parallel networks, \emph{Congr. Numer.},
\textbf{56}(1987), 135-162.


\bibitem{Das}
A. Das, Non-Zero component graph of a finite dimensional vector
space, \emph{Communications in Algebra}, \textbf{44}(2016),
3918-3926.
\bibitem{Das1}
A. Das, On non-zero component graph of vector spaces over finite
fields, \emph{Journal of Algebra and Its Applications}, (2016),
1750007.
\bibitem{fin}
A. Finbow and B. L. Hartnell, On locating-dominating
sets and well-covered graphs, \emph{Congr. Numer.},
\textbf{56}(1987), 135-162.

\bibitem{Gould}
R. Gould, Graphs and vector spaces, \emph{Studies in Applied
Mathematics} \textbf{37}(1958), 193-214.

\bibitem{Hernando}
C. Hernando, M. Mora, I. M Pelaya, C. Seara and D. R. Wood, Extremal
graph theory for metric dimension and diameter, \emph{Electron.
Notes Discrete Math}, \textbf{29}(2007), 339--343.

\bibitem{hon1}
I. Honkala, M. Karpovsky, and L. Levitin, On robust and dynamic
identifying codes, \emph{IEEE Trans. Inf. Theory},
\textbf{52(2)}(2006) 599–612.

\bibitem{hon} I. Honkala, T. Laihonen and S. Ranto, On locating-dominating codes in
binary Hamming spaces, \emph{Disc. Math. Theor. Comput. Sci.},
\textbf{6}(2004), 265-282.


\bibitem{com}
A. Iranmanesh and A. Jafarzadeh, On the commuting graph associated
with symmetric and alternating groups, \emph{Journal of Algebra and
Its Applications}, \textbf{7}(2008), 129–146.

\bibitem{Rad}
N. Jafari Rad and S. H. Jafari, Results on the intersection graphs
of subspaces of a vector space. Available at
http://arxiv.org/abs/1105.0803v1.

\bibitem{Karpov}
M. G. Karpovsky, K. Chakrabarty and L. B. Levitin, On a new class of
codes for identifying vertices in graphs, \emph{IEEE Transactions on
Information Theory} \textbf{44}(1998), 599-611.

\bibitem{Laifenf}
M. Laifenfeld and A. Trachtenberg, Disjoint identifying codes for
arbitrary graphs, \emph{Proc. IEEE Int. Symp. Information Theory,
Adelaide, Australia}, \textbf{Sep}(2005), 244–288.

\bibitem{Laifenfe}
M. Laifenfeld, A. Trachtenberg, and T. Berger-Wolf,
\emph{Identifying codes and the set cover problem}, \emph{Proc. 44th
Annu. Allerton Conf. Communication, Control, and Computing,
Monticello, IL}, \textbf{Sep}(2006).


\bibitem{Laifen}
M. Laifenfeld, A. Trachtenberg, R. Cohen and D. Starobinski, Joint
monitoring and routing in wireless sensor networks using robust
identifying codes. \emph{Proceedings of IEEE Broadnets 2007},
\textbf{9}(2007), 197-206.

\bibitem{Manjula}
V. Manjula, Vector Space of a Graph, \emph{Int. Jour. of Math. and
Comp. Res.}, \textbf{2}(2014), 2320--7167.

\bibitem{Moghaddamfar}
A. R. Moghaddamfar, S. Rahbariyan and W. J. Shi, Certain properties
of the power graph associated with finit group, \emph{Journal of
Algebra and its Applications.}, \textbf{13}(2014), 450040.


\bibitem{rall} D. F. Rall and P. J. Slater, On location-domination
numbers for certian classes of graphs, \emph{Congr. Numer.},
\textbf{45}(1984), 97-106.

\bibitem{ray}
 S. Ray, D. Starobinski, A. Trachtenberg,
and R. Ungrangsi, Robust location detection with sensor networks,
\emph{IEEE J. Sel. Areas Commun.}, \textbf{22(6)}(2004), 1016–1025,
Aug. 2004.


\bibitem{ray1}
S. Ray, R. Ungrangsi, F. D. Pellegrinin, A. Trachtenberg, and D.
Starobinski, Robust location detection in emergency sensor networks,
 \emph{Proc. INFOCOM, San Francisco, CA}, \textbf{april} (2003),
1044–1053.
\bibitem{slater1} P. J. Slater, Dominating and reference sets in a graph, \emph{J. Math.
Phys. Sci.}, \textbf{22}(1988), 445-455.

\bibitem{slater2} P. J. Slater, Fault-tolerant locating-dominating sets, \emph{Disc. Math.}, \textbf{249}
(2002), 179-189.

\bibitem{slater3} P. J. Slater, Dominating and location in acyclic in
graphs, \emph{Networks}, \textbf{17}(1987), 55-64.


\bibitem{Talebi}
Y. Talebi, M.S. Esmaeilifar and S. Azizpour, A kind of intersection
graph of vector space, \emph{Journal of Discrete Mathematical
Sciences and Cryptography} 12, \textbf{6}(2009), 681-689.
\bibitem{Ungra}
R. Ungrangsi, A. Trachtenberg and D. Starobinski. An implementation
of indoor location detection systems based on identifying codes.
\emph{Proceedings of Intelligence in Communication Systems,
INTELLCOMM 2004, Lecture Notes in Computer Science},
\textbf{3283}(2004),175-189.

\end{thebibliography}
\end{document}